\newtheorem{de}{Definition}
\newtheorem{theo}[de]{Theorem}
\newtheorem{lem}[de]{Lemma}
\def\bexists{\exists\kern-.39em{}\mbox{I}}   
\def\bforall{\forall\kern-.41em{}\raise.31em\hbox{\vbox{\hrule width .25em}}} 
\def\hyph{\mbox{-}}
\newbox\gnBoxA
\newdimen\gnCornerHgt
\newdimen\gnArgHgt
\def\gn #1{%
\setbox\gnBoxA=\hbox{$#1$}%
\gnArgHgt=\ht\gnBoxA%
\ifnum     \gnArgHgt<\gnCornerHgt \gnArgHgt=0pt%
\else \advance \gnArgHgt by -\gnCornerHgt%
\fi \raise\gnArgHgt\hbox{$\ulcorner$} \box\gnBoxA %
\raise\gnArgHgt\hbox{$\urcorner$}}
\begin{document}
\title{A Note on McGee's $\omega$-Inconsistency Result\thanks{After I had finished a draft of this note I discovered that basically the same point has been made in \cite[Theorem 11]{lei01}. This put an end to the idea of publishing the note. Since in the present note the point is made in a slightly different and, I believe, crisper way I decided nonetheless to make the note publicly available. I wish to thank Matteo Zichetti, who drew my attention to a rather important typo in an earlier version of this note.}} 
\author{Johannes Stern\\johannes.stern@lrz.uni-muenchen.de}

\maketitle
\begin{abstract}In this note we show that McGee's $\omega$-inconsistency result can be derived from Löb's theorem.\end{abstract}
In his paper ``How Truthlike Can a Predicate be?" \cite{mcg85} showed the $\omega$-inconsistency of a broad family of theories of truth. The purpose of this note is to highlight the connection between McGee's result and Löb's theorem. Once this connection is made explicit McGee's $\omega$-inconsistency result may be viewed as a variant of Gödel's second incompleteness theorem. For expository purposes we start by providing McGee's result roughly following his original derivation.\footnote{On notation: $\mathcal{L}$ is a standard arithmetical language with the exception that we assume the existence of certain function symbols in the language. In particular we assume the existence of the function symbol $f^\bullet$ (cf.~below). $\mathcal{L}_T$ ($\mathcal{L}_P$) is the extension of $\mathcal{L}$ by a unary predicate $T$ ($P$). We assume some standard coding scheme for the expressions of the languages under consideration and denote the name of the code of an expression $\eta$ by $\gn\eta$. The numeral of a natural number $n$ is denoted by $\overline n$. Finally, we write $\ulcorner\phi(\dot x)\urcorner$ for denoting the function that with $n$ as argument provides the code of the formula $\phi(\overline n)$.} 

\begin{theo}[McGee]\label{OI}Let $\Gamma$ be a theory extending $Q$ in the language $\mathcal{L}_T$, which is closed under the rule
\begin{flalign*}&(T\hyph{Intro})&&\frac{\phi}{T\gn\phi}&&\end{flalign*}
and proves
\begin{flalign*}
&(\text{Cons})&&T\gn{\neg\phi}\rightarrow\neg T\gn\phi&&\\
&(T\hyph\text{Imp})&&T\gn{\phi\rightarrow\psi}\rightarrow(T\gn\phi\rightarrow T\gn\psi)&&\\
&(\text{UInf})&&\forall x T\gn{\phi(\dot{x})}\rightarrow T\gn{\forall v\phi(v)}&&
\end{flalign*}
for all $\phi,\psi\in\mathsf{Sent_{\mathcal{L}_{T}}}$. Then $\Gamma$ is $\omega$-inconsistent.
\end{theo}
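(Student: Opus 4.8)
The plan is to build a single self-referential ``iterated truth'' formula $\psi(x)$ and to show that $\Gamma$ proves every numerical instance $\psi(\overline n)$ while refuting the universal closure $\forall v\,\psi(v)$; these two facts are exactly $\omega$-inconsistency. Writing $D:=\forall v\,\psi(v)$, I would use the recursion theorem together with the representing function $f^\bullet$ to obtain $\psi$ satisfying, provably in $\Gamma$,
\[
\psi(\overline 0)\leftrightarrow\neg T\gn{D}
\qquad\text{and}\qquad
\forall y\,\bigl(\psi(Sy)\leftrightarrow T\gn{\psi(\dot y)}\bigr),
\]
the second equivalence being available uniformly in $y$ (this is the role of $f^\bullet$ and of the term $\ulcorner\psi(\dot y)\urcorner$). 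The base clause is a Liar-like diagonalisation, ``$\psi(\overline 0)$ says that $D$ is untrue'', and the successor clause makes $\psi(\overline{n+1})$ assert the truth of $\psi(\overline n)$, so that $\psi(\overline n)$ is morally $T$ iterated $n$ times on $\neg T\gn{D}$.

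The heart of the argument is to show $\Gamma\vdash\neg D$, and this is where the Löbian character appears. Instantiating the base clause gives $\Gamma\vdash D\to\neg T\gn{D}$. In the opposite direction, from $D$ one gets $\forall y\,\psi(Sy)$, hence by the uniform successor equivalence $\forall y\,T\gn{\psi(\dot y)}$; feeding this into (UInf) with $\phi:=\psi$ yields $T\gn{\forall v\,\psi(v)}=T\gn{D}$. Thus $\Gamma\vdash D\to T\gn{D}$, and combining the two implications gives $\Gamma\vdash D\to(T\gn{D}\wedge\neg T\gn{D})$, so $\Gamma\vdash\neg D$. The two implications are precisely the ingredients of a Löb-style argument for the predicate $T\gn{\cdot}$: $T\hyph$Intro and (T-Imp) act as the first two derivability conditions, while (UInf) supplies the internal reflection $D\to T\gn{D}$ that plays the role the third derivability condition plays in the usual proof of Löb's theorem. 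Reading $\neg T\gn{D}$ as a consistency-type statement, $\Gamma\vdash\neg D$ is the analogue of the unprovability of consistency.

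With $\Gamma\vdash\neg D$ in hand, the remaining half is a short external induction. From $\neg D$, $T\hyph$Intro gives $T\gn{\neg D}$, and (Cons) then yields $\neg T\gn{D}$, so by the base clause $\Gamma\vdash\psi(\overline 0)$. Climbing the successor clause, $\Gamma\vdash\psi(\overline n)$ implies $\Gamma\vdash T\gn{\psi(\overline n)}$ by $T\hyph$Intro and hence $\Gamma\vdash\psi(\overline{n+1})$; by induction on $n$ in the metatheory, $\Gamma\vdash\psi(\overline n)$ for every $n$. Together with $\Gamma\vdash\neg\forall v\,\psi(v)$ this exhibits the required $\omega$-inconsistency.

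The step I expect to be the main obstacle is the reflection implication $D\to T\gn{D}$: it must be obtained as a genuine quantified theorem so that (UInf) is applicable, which forces the successor equivalence to hold uniformly in $y$ rather than merely instance by instance. Securing this uniformity is exactly what the recursion-theoretic construction with $f^\bullet$ has to deliver, and it is also the point at which the argument cannot be replaced by naive ``each instance is provable'' reasoning: a genuinely internal, $\omega$-style reflection is needed, mirroring the diagonalisation inside the proof of Löb's theorem.
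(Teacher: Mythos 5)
Your proof is correct, but it implements the diagonalisation differently from the paper. The paper follows McGee: it first defines the $\omega$-truth predicate $T^\omega x:=\forall y\, Tf^\bullet(y,x)$ from the iteration function $f$, diagonalises a \emph{sentence} $\gamma\leftrightarrow\neg T^\omega\gn\gamma$, derives $\neg T^\omega\gn\gamma$ and $\gamma$ via the auxiliary facts (A1), (A2) together with (Cons) and ($T$-Imp), and takes $Tf^\bullet(x,\gn\gamma)$ as the $\omega$-inconsistency witness, its instances coming from iterated ($T$-Intro). You instead diagonalise a \emph{formula with a free variable}: the parametrised diagonal lemma, plus $Q$'s case split $x=0\vee\exists y\,(x=Sy)$, delivers your $\psi$ with its base and successor clauses, the universal closure $D=\forall v\,\psi(v)$ plays the role of $T^\omega\gn\gamma$, and your two implications $D\to\neg T\gn{D}$ (Liar base) and $D\to T\gn{D}$ (via the uniform successor clause and (UInf)) replace the paper's lines 4 and 5; then (Cons), ($T$-Intro) and the external induction give every $\psi(\overline n)$, exactly matching the paper's definition of $\omega$-inconsistency. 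Two points of comparison are worth recording. First, your argument nowhere uses ($T$-Imp): because the self-referential equivalence is definitional and is never transported inside $T$ (the inner occurrences are handled by the successor clause, which is pure arithmetic given provable defining equations for the substitution term, an assumption of the same caliber as the paper's uniform use of $f^\bullet$ in (A1)), you in fact establish a marginally stronger theorem, whereas the paper needs ($T$-Imp) at line 2 to obtain $T\gn\gamma\leftrightarrow T\gn{\neg T^\omega\gn\gamma}$. Second, the paper's detour through an explicit $T^\omega$ is what lets it later isolate the L\"ob derivability conditions (Lemma \ref{LC}) and recast Theorem \ref{OI} as a corollary of L\"ob's theorem (Theorem \ref{LP}); your proof instead inlines that L\"obian structure, with $D\to T\gn{D}$ serving as the internal reflection that the third derivability condition supplies in L\"ob's argument, just as you observe. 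You also correctly identify the one genuinely delicate point, namely that the successor equivalence must be provable with $y$ free so that (UInf) applies; instance-by-instance provability would not suffice.
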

A theory is $\omega$-inconsistent if there exists a formula $\phi(x)$ such that the theory proves $\neg\forall x\phi$ and $\phi(\overline n)$ for all $n\in\omega$. In other words, if we allow for one application of the $\omega$-rule inconsistency will arise. The $\omega$-rule allows us to infer $\forall x\phi$ if we have derived $\phi(\overline n)$ for all $n\in\omega$. 

The crucial observation by McGee was that there is a two-place primitive recursive function $f$, which when applied to a natural number $n$ and the code, i.e., the Gödel number, of a sentence $\phi$ provides the code of the sentence
\begin{align*}\underbrace{T\ulcorner\dots T}_{n\hyph\text{times}}\gn\phi\ldots\urcorner.\end{align*}
This allowed McGee to define an $\omega$-truth predicate
\begin{align*}T^\omega x:=\forall y T f^\bullet(y,x)\end{align*}
where $f^{\bullet}$ is a function symbol representing $f$ in $\Gamma$.\footnote{For ease of exposition we avail ourselves to certain function symbols, such as $f^{\bullet}$, in the language. In this we follow \cite{hal11} presentation of McGee's result. \cite{mcg85} presented his result without assuming such function symbols.}
A sentence $\phi$ is $\omega$-true iff each sentence resulting from $\phi$ by applying any finite number of truth predicate to the sentence is true. By first-order logic and (UInf) one can easily prove the following characteristics of $T^\omega$:
\begin{flalign*}
&\text{(A1)}&&T^\omega\gn\phi\rightarrow T\gn{T^\omega\gn\phi}&&\\
&\text{(A2)}&&T^\omega\gn\phi\rightarrow T\gn\phi&&
\end{flalign*}
With these prerequisites we can provide a crisp version of McGee's original proof:
\begin{proof}We start by an application of the Diagonal lemma:
\begin{enumerate}
\item $\gamma\leftrightarrow\neg T^\omega\gn\gamma$
\item $T\gn\gamma\leftrightarrow T\gn{\neg T^\omega\gn\gamma}$\hfill1,($T$-Intro),($T$-Imp)
\item $T\gn\gamma\rightarrow \neg T\gn{T^\omega\gn\gamma}$\hfill 2,(Cons)
\item $T\gn\gamma\rightarrow \neg T^\omega\gn\gamma$\hfill 3,A1
\item $T^\omega\gn\gamma\rightarrow T\gn\gamma$\hfill A2
\item $\neg T^\omega\gn\gamma$\hfill 4,5
\item $\gamma$\hfill 1,6
\end{enumerate}
From line 7 and ($T$-Intro) we may derive 
\begin{align*}\underbrace{T\gn\gamma}_{T f^\bullet(\overline 0,\gamma),}, \underbrace{T\gn{T\gn\gamma}}_{T f^\bullet(\overline 1,\gn\gamma),}, \underbrace{T\gn{T\gn{T\gn\gamma}}}_{T f^\bullet(\overline 2,\gn\gamma),},\ldots\end{align*}
By the $\omega$-rule this yields $\forall x T f^\bullet(x,\gn\gamma)$, that is, $T^\omega\gn\gamma$, which contradicts line 6 above.
\end{proof}

McGee's result adds to a family of inconsistency results, such as, Tarski's undefinability result (\cite{tar35}) or Montague's theorem (\cite{mon63}) that point to severe limitations on \emph{how truthlike predicates can be}. However, as we have seen, we have to go beyond the resources of classical first-order logic to turn the $\omega$-inconsistency result into an inconsistency result proper. But if these resources, that is, the $\omega$-rule, are made available, McGee's result turns out to be a direct consequence of Löb's theorem and as such a variant of Gödel's second incompleteness theorem.\footnote{See \cite{smo84} for a discussion of the relation between Löb's theorem and Gödel's second incompleteness theorem.} The reason is that in a theory of truth that proves ($T$-Imp), (UInf) and is closed under the rule ($T$-Intro), we can derive the three Löb derivability conditions for the $\omega$-truth predicate $T^\omega$ if we allow for application of the $\omega$-rule.\footnote{There exist $\omega$-consistent theories of this kind thus this observation is non-trivial in the sense that the application of the $\omega$-rule does not lead to inconsistency, i.e., the explosion of the derivability relation.} This implies that we can derive Löb's theorem for $T^\omega$, which directly contradicts the principle (Cons) because (Cons) forces the $\omega$-truth predicate to be provably consistent, that is, we can prove $\neg T^\omega\gn{\overline 0=\overline 1}$. 

Let us make this observation explicit. \cite{lob55} showed that if a theory $\Lambda$ extending $Q$ the following conditions are satisfied for a predicate $P$
\begin{flalign*}
&(D1)&&\Lambda\vdash\phi\Rightarrow\Lambda\vdash P\gn\phi&&\\
&(D2)&&\Lambda\vdash P\gn{\phi\rightarrow\psi}\rightarrow(P\gn\phi\rightarrow P\gn\psi)\\
&(D3)&&\Lambda\vdash P\gn\phi\rightarrow P\gn{P\gn\phi}
\end{flalign*}
for all  $\phi,\psi\in\mathsf{Sent_{\mathcal{L}_{P}}}$. Then $\Lambda$ proves
\begin{flalign*}
&(L1)&&P\gn\phi\rightarrow\phi\Rightarrow\Lambda\vdash\phi&&\\
&(L2)&&P\gn{P\gn\phi\rightarrow\phi}\rightarrow P\gn\phi
\end{flalign*}
for all  $\phi\in\mathsf{Sent_{\mathcal{L}_{P}}}$. (L1) is known as Löb's theorem, whereas (L2) is the so-called formalized Löb's theorem. In a theory $\Sigma$ that is just like the theory $\Gamma$ of Theorem \ref{OI} with the exception that (Cons) is no longer assumed we may establish both versions of Löb's theorem for the predicate $T^\omega$. 

\begin{theo}[$\omega$-Löb]\label{LP}Let $\Sigma$ be a theory extending $Q$ in the language $\mathcal{L}_T$, which is closed under the rule
\begin{flalign*}&(T\hyph{Intro})&&\frac{\phi}{T\gn\phi}&&\end{flalign*}
and proves
\begin{flalign*}
&(T\hyph\text{Imp})&&T\gn{\phi\rightarrow\psi}\rightarrow(T\gn\phi\rightarrow T\gn\psi)&&\\
&(\text{UInf})&&\forall x T\gn{\phi(\dot{x})}\rightarrow T\gn{\forall v\phi(v)}&&
\end{flalign*}
for all $\phi,\psi\in\mathsf{Sent_{\mathcal{L}_{T}}}$. Let $T^\omega$ be defined as above. Then
\begin{flalign*}
&(i)&&\Sigma\vdash_\omega T^\omega\gn\phi\rightarrow\phi\Rightarrow\Sigma\vdash_\omega\phi&&\\
&(ii)&&\Sigma\vdash_\omega T^\omega\gn{T^\omega\gn\phi\rightarrow\phi}\rightarrow T^\omega\gn\phi.
\end{flalign*}
\end{theo}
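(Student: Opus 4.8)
The plan is to verify that the $\omega$-truth predicate $T^\omega$ satisfies the three L\"ob derivability conditions (D1)--(D3) \emph{over $\Sigma$ with the $\omega$-rule available}, i.e.\ with $\vdash_\omega$ playing the role of the consequence relation $\vdash$, and then to read off (i) and (ii) as the instances (L1) and (L2) of L\"ob's theorem for the predicate $T^\omega$. Since the implication (D1)--(D3) $\Rightarrow$ (L1),(L2) is established abstractly for \emph{any} predicate over a theory extending $Q$, and the diagonal lemma it relies on is purely arithmetical and hence insensitive to the presence of the $\omega$-rule, it suffices to establish (D1)--(D3) for $T^\omega$.

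First, (D1). Here I would simply isolate the bottom half of the proof of Theorem \ref{OI}: if $\Sigma\vdash_\omega\phi$, then $n$-fold application of ($T$-Intro) yields $\Sigma\vdash_\omega T f^\bullet(\overline n,\gn\phi)$ for every $n\in\omega$ (recall that $T f^\bullet(\overline n,\gn\phi)$ is $\phi$ prefixed by $n+1$ truth predicates), whence a single application of the $\omega$-rule gives $\Sigma\vdash_\omega\forall y\, T f^\bullet(y,\gn\phi)$, that is, $\Sigma\vdash_\omega T^\omega\gn\phi$. This is exactly the manoeuvre already used at the end of McGee's proof, now recorded as a derivability condition.

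The real work lies in (D2) and (D3), both of which rest on a level-by-level ``pushing through'' of ($T$-Imp) obtained by induction on $n$. For (D2) I would prove, by induction on $n\ge 1$, that $\Sigma\vdash K_n$, where $K_n$ abbreviates $T^n\gn{\alpha\rightarrow\beta}\rightarrow(T^n\gn\alpha\rightarrow T^n\gn\beta)$ and $T^n$ denotes $n$ stacked truth predicates: the base case $K_1$ is ($T$-Imp), and for the step one applies ($T$-Intro) to $K_n$ and then ($T$-Imp) twice to commute the outer truth predicate past the implication. Granting all the $K_n$, assume $T^\omega\gn{\phi\rightarrow\psi}$ and $T^\omega\gn\phi$; instantiating both at $\overline n$ and applying $K_{n+1}$ delivers $T f^\bullet(\overline n,\gn\psi)$ for each $n$, and the $\omega$-rule then yields $T^\omega\gn\psi$. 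For (D3) I would take (A1), namely $T^\omega\gn\phi\rightarrow T\gn{T^\omega\gn\phi}$, as the base of a second induction: writing $\sigma:=T^\omega\gn\phi$, the same ($T$-Intro)/($T$-Imp) step upgrades $T^k\gn\sigma\rightarrow T^{k+1}\gn\sigma$ to $T^{k+1}\gn\sigma\rightarrow T^{k+2}\gn\sigma$, so chaining from (A1) gives $\Sigma\vdash\sigma\rightarrow T^{k}\gn\sigma$ for every $k\ge 1$. Assuming $\sigma$ and reading $T f^\bullet(\overline n,\gn\sigma)$ as $T^{n+1}\gn\sigma$, the $\omega$-rule then produces $\forall y\, T f^\bullet(y,\gn\sigma)$, i.e.\ $T^\omega\gn{T^\omega\gn\phi}$, which is (D3).

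I expect (D3) to be the main obstacle: it is the one condition genuinely specific to the $\omega$-predicate, and it is where (A1), the iterated ($T$-Intro)/($T$-Imp) argument, and the $\omega$-rule must all be brought together. The only further point I would check with care is that discharging the assumptions in (D2) and (D3) remains legitimate in the presence of the $\omega$-rule; this is unproblematic, since the discharged hypotheses are sentences in which the generalized variable $y$ does not occur, so the $\omega$-rule commutes with the deduction theorem in exactly the way required.
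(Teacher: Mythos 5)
Your proposal is correct and takes essentially the same route as the paper: its Lemma \ref{LC} is precisely your verification of the L\"ob derivability conditions (D1)--(D3) for $T^\omega$ with respect to $\vdash_\omega$, after which Theorem \ref{LP} is read off from L\"ob's theorem exactly as you describe. Your $K_n$ induction for (D2), the (A1)-chaining for (D3), and the remark that the $\omega$-rule commutes with discharging assumptions merely spell out details the paper compresses into a few lines.
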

The derivability relation $\vdash_\omega$ signifies the closure of the classical derivability relalion under non-embedded applications of the $\omega$-rule.\footnote{As a consequence we do not appeal to full $\omega$-logic for establishing Theorem \ref{LP}. In other words to avoid triviality we only need to assume the $\omega$-consistency of $\Sigma$ rather than the existence of standard models of $\Sigma$.} As to be expected,Theorem \ref{LP} is established by showing that the three Löb derivability conditions can be proved in $\Sigma$ for the predicate $T^\omega$. We state this claim in the following lemma.

\begin{lem}\label{LC}Let $\Sigma$ be as in Theorem \ref{LP}. Then for all $\phi\in\mathsf{Sent_{\mathcal{L}_T}}$
\begin{flalign*}
&(M1)&&\Sigma\vdash_\omega\phi\Rightarrow\Sigma\vdash_\omega T^\omega\gn\phi&&\\
&(M2)&&\Sigma\vdash_\omega T^\omega\gn{\phi\rightarrow\psi}\rightarrow (T^\omega\gn\phi\rightarrow T^\omega\gn\psi)\\
&(M3)&&\Sigma\vdash_\omega T^\omega\gn\phi\rightarrow T^\omega\gn{T^\omega\gn\phi}.
\end{flalign*}\end{lem}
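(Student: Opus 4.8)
The plan is to verify (M1)--(M3) by one uniform recipe: for each condition I first derive, by an induction carried out in the metatheory, a \emph{family} of ordinary (classical, $\omega$-free) theorems of $\Sigma$ indexed by $n\in\omega$; I then collapse this family into a single universally quantified sentence by a single top-level, \emph{non-embedded} application of the $\omega$-rule; and I finish with pure first-order reasoning. Throughout I write $\tau_n[\chi]$ for the sentence obtained by prefixing $n$ truth predicates to $\chi$, so that $\tau_0[\chi]=\chi$ and $\tau_{n+1}[\chi]=T\gn{\tau_n[\chi]}$. Since $\Sigma$ extends $Q$ and $f^\bullet$ represents $f$, for every $n$ we have $\Sigma\vdash f^\bullet(\overline n,\gn\chi)=\gn{\tau_n[\chi]}$, and substituting this provable term-identity inside the single predicate $T$ gives $\Sigma\vdash Tf^\bullet(\overline n,\gn\chi)\leftrightarrow T\gn{\tau_n[\chi]}$, where $T\gn{\tau_n[\chi]}$ is by definition $\tau_{n+1}[\chi]$. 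This identification --- note that it stays at the level of one outermost $T$ and so is mere first-order equality reasoning --- lets me pass between the term $f^\bullet(\overline n,\gn\chi)$ and the explicit iterate at will.

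For (M1), assume $\Sigma\vdash_\omega\phi$; applying ($T$-Intro) $n+1$ times gives $\Sigma\vdash_\omega\tau_{n+1}[\phi]$, i.e.\ $\Sigma\vdash_\omega Tf^\bullet(\overline n,\gn\phi)$, for every $n$, so one use of the $\omega$-rule delivers $\Sigma\vdash_\omega\forall y\,Tf^\bullet(y,\gn\phi)$, i.e.\ $\Sigma\vdash_\omega T^\omega\gn\phi$. For (M2) the inductive core is the schema $\Sigma\vdash\tau_n[\phi\rightarrow\psi]\rightarrow(\tau_n[\phi]\rightarrow\tau_n[\psi])$: the base $n=0$ is a propositional triviality, and the step applies ($T$-Intro) to the hypothesis and then ($T$-Imp) twice --- once to distribute the outer $T$ across the main implication, once across the inner one. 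Reading this schema at $n+1$ through the representability identification yields $\Sigma\vdash Tf^\bullet(\overline n,\gn{\phi\rightarrow\psi})\rightarrow(Tf^\bullet(\overline n,\gn\phi)\rightarrow Tf^\bullet(\overline n,\gn\psi))$ for every $n$; the $\omega$-rule quantifies this over $y$, and classical logic then performs the distribution turning $\forall y(A(y)\rightarrow(B(y)\rightarrow C(y)))$ together with $\forall y\,A(y)$ and $\forall y\,B(y)$ into $\forall y\,C(y)$, which is exactly $T^\omega\gn{\phi\rightarrow\psi}\rightarrow(T^\omega\gn\phi\rightarrow T^\omega\gn\psi)$.

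For (M3) I start from (A1), $T^\omega\gn\phi\rightarrow T\gn{T^\omega\gn\phi}$, which holds in $\Sigma$ since its derivation uses only first-order logic and (UInf); writing $\alpha:=T^\omega\gn\phi$, (A1) reads $\alpha\rightarrow\tau_1[\alpha]$. Lifting (A1) by $k$ alternations of ($T$-Intro) and ($T$-Imp) gives $\Sigma\vdash\tau_k[\alpha]\rightarrow\tau_{k+1}[\alpha]$ for each $k$ --- all of these are purely explicit syntactic implications, so no coding subtlety intervenes --- and chaining them from $k=0$ through $k=n$ yields $\Sigma\vdash\alpha\rightarrow\tau_{n+1}[\alpha]$, i.e.\ $\Sigma\vdash\alpha\rightarrow Tf^\bullet(\overline n,\gn\alpha)$ for every $n$. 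To collapse this family without invoking a deduction theorem for $\vdash_\omega$, I apply the $\omega$-rule not to the consequents but to the whole conditionals $\eta(y):=\bigl(\alpha\rightarrow Tf^\bullet(y,\gn\alpha)\bigr)$ --- each $\eta(\overline n)$ is already a theorem, so the rule applies with no open assumption --- obtaining $\Sigma\vdash_\omega\forall y\,\eta(y)$; since $y$ is not free in $\alpha$, first-order logic rewrites this as $\alpha\rightarrow\forall y\,Tf^\bullet(y,\gn\alpha)$, that is, $T^\omega\gn\phi\rightarrow T^\omega\gn{T^\omega\gn\phi}$.

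The main obstacle is (M3). Whereas the schemata behind (M1) and (M2) are generated directly, the one behind (M3) rests on correctly lifting (A1) through arbitrarily many truth predicates --- it is exactly the feedback of ($T$-Intro) into ($T$-Imp) that promotes the single one-step reflection (A1) to the full ``4''-type principle --- and on discharging the hypothesis $T^\omega\gn\phi$ in the presence of the $\omega$-rule, which is why I keep that hypothesis inside the conditional fed to the rule rather than assuming it and appealing to a deduction theorem that may fail for $\vdash_\omega$. A quieter but error-prone ingredient shared by all three cases is the representability bookkeeping identifying $Tf^\bullet(\overline n,\gn\chi)$ with $\tau_{n+1}[\chi]$, together with the discipline of letting each proof use exactly one outermost (non-embedded) $\omega$-rule step, so that the result genuinely lands in $\vdash_\omega$.
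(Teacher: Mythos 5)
Your proof is correct and follows essentially the same route as the paper's: (M1) by iterating ($T$-Intro) and applying the $\omega$-rule, (M2) by pushing ($T$-Imp) through each level via ($T$-Intro) before the $\omega$-rule, and (M3) by lifting (A1) with alternating ($T$-Intro)/($T$-Imp) steps, chaining, and one top-level $\omega$-rule application. You merely make explicit some bookkeeping the paper leaves implicit (the identification $Tf^\bullet(\overline n,\gn\chi)\leftrightarrow\tau_{n+1}[\chi]$, the metatheoretic inductions, and applying the $\omega$-rule to whole conditionals before the first-order quantifier shift), which is a welcome gain in rigor but not a different argument.
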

\begin{proof}(M1) follows directly from the rule ($T$-Intro), the $\omega$-rule and the definition of $T^\omega$. (M2) follows from ($T$-Imp), ($T$-Intro), the $\omega$-rule and again the definition of $T^\omega$. For (M3) we observe that by (A1) we have $T^\omega\gn\phi\rightarrow T\gn{T^\omega\gn\phi}$, i.e., $T^\omega\gn\phi\rightarrow T f^\bullet(\overline 0,\gn{T^\omega\gn\phi})$. By ($T$-Intro) and ($T$-Imp) we obtain $T\gn{T^\omega\gn\phi}\rightarrow T\gn{T\gn{T^\omega\gn\phi}}$. Then by (A1) we derive $T^\omega\gn\phi\rightarrow T\gn{T\gn{T^\omega\gn\phi}}$, that is, $T^\omega\gn\phi\rightarrow T f^\bullet(\overline 1,\gn{T^\omega\gn\phi})$. Clearly, we may repeat this process and therefore, by an application of the $\omega$-rule, derive $T^\omega\gn\phi\rightarrow\forall x T f^\bullet(x,\gn{T^\omega\gn\phi})$. By definition of $T^\omega$ this is the desired (M3).
\end{proof}
By Lemma \ref{LC}, Theorem \ref{LP} is a direct corollary of Löb's original result. Moreover, as we have already mentioned, McGee's $\omega$-inconsistency result proves to be a direct corollary of this result.
\begin{proof}[Alternative proof of Theorem \ref{OI}] Since the theory $\Gamma$ proves (Cons) and thus $\Gamma\vdash_\omega T^\omega\gn{\neg\phi}\rightarrow\neg T^\omega\gn\phi$, we have $\Gamma\vdash T^\omega\gn{\overline 0=\overline 1}\rightarrow 0=1$. By Theorem \ref{LP} Löb's theorem holds for $\Gamma$. This yields the contradiction.
\end{proof}

This observation establishes a firm link between McGee's theorem and Löb's theorem. This connection is of course not too surprising. In their paper ``Possible World Semantics for Modal Notions Conceived as Predicates" \cite{hlw03} already remark: ``In the end all limitative results can be derived from Löb's theorem." Their observation, however, depends on rather involved semantic considerations while the present note might serve as an accessible illustration of this general fact.\footnote{In the terminology of \cite{hlw03}, the connection between McGee's result and Löb's theorem is roughly as follows: a given possible world frame $\langle W,R\rangle$ admits a valuation only if the frame $\langle W,R^\ast\rangle$ does, where $R^\ast$ is the transitive closure of $R$. This is precisely due to the fact that we can define an $\omega$-truth predicate. Indeed $R^*$ is the relevant accessibility relation for $T^w$. Moreover, the valuation on the frame $\langle W,R^\ast\rangle$ must be such that Löb's theorem is true at each world, which immediately yields all limitative results. The question which possible world frames allow for an valuation may therefore be rephrased as the question: for which transitive frames can we find a valuation such that Löb's theorem is true at each world of the model?}

\bibliographystyle{apalike}
\bibliography{/users/jstern/Dropbox/Uni/Bibliography/LITA-3}
\end{document}